\newtheorem{theo}{Theorem}%[section]
\newtheorem{lema}[theo]{Lemma}
\DeclareMathOperator{\dist}{dist}
\DeclareMathOperator{\tr}{tr}
\DeclareMathOperator{\spec}{sp}
\def\j{\mbox{\boldmath $j$}}
\def\r{\mbox{\boldmath $r$}}
\def\vec0{\mbox{\boldmath $0$}}
\def\A{\mbox{\boldmath $A$}}
\def\F{\mbox{\boldmath $F$}}
\def\I{\mbox{\boldmath $I$}}
\def\J{\mbox{\boldmath $J$}}
\def\K{\mbox{\boldmath $K$}}
\def\L{\mbox{\boldmath $L$}}
\def\M{\mbox{\boldmath $M$}}
\def\N{\mbox{\boldmath $N$}}
\def\U{\mbox{\boldmath $U$}}
\def\G{\Gamma}
\def\Re{\mathbb R}
\begin{document}
\title{The Laplacian Spectral Excess \\ Theorem  for Distance-Regular Graphs\footnote{This version is published in
Linear Algebra and its Applications 458 (2014), 245--250.}
}

\author{E.R. van Dam$^a$, M.A. Fiol$^b$
\\ \\
{\small $^a$Tilburg University, Dept. of Econometrics and O.R.} \\
{\small  Tilburg, The Netherlands}\\
{\small (e-mail: {\tt Edwin.vanDam@uvt.nl)}} \\
{\small $^b$Universitat Polit\`ecnica de Catalunya, BarcelonaTech} \\
{\small Dept. de Matem\`atica Aplicada IV, Barcelona, Catalonia}\\
{\small (e-mail: {\tt
fiol@ma4.upc.edu})} \\
 }

\date{}
\maketitle

\begin{abstract}
The spectral excess theorem states that, in a regular graph $\G$, the average excess, which is the mean of the numbers of vertices at maximum distance from a vertex, is bounded above by the spectral excess (a number that is computed by using the adjacency spectrum of $\G$), and $\G$ is distance-regular if and only if equality holds. In this note we prove the corresponding result by using the Laplacian spectrum without requiring regularity of $\G$.
 \end{abstract}

 \noindent{\em Keywords:} Distance-regular graph; Spectral excess theorem; Laplacian spectrum; Orthogonal polynomials.

\section{Introduction}
The spectral excess of a regular (connected) graph $\G$ is a number which can be computed from its
(adjacency matrix) spectrum, whereas its average excess is the mean of the numbers of vertices at
maximum distance from a vertex. The spectral excess theorem, due to Fiol and Garriga \cite{fg97} states that $\G$ is distance-regular
if and only if its spectral excess equals its average excess (see Van Dam \cite{vd08} and Fiol, Garriga, and Gago \cite{fgg10} for short proofs). Since the
paper \cite{fg97} appeared, some attempts have been made to prove a version  of the spectral excess theorem that does not require regularity of $\G$ (see Lee and Weng \cite{lw11,lw14} and Fiol \cite{f13}). The problem with these attempts is that the  obtained equalities only lead to distance-regularity in some specific cases (graphs with extremal diameter, bipartite graphs, etc.), some of them already covered by the results in \cite{fg97}.

In this note we show that the right approach to the spectral excess theorem for general graphs is to derive it from the Laplacian spectrum of the graph. This approach was motivated by the fact that a bound on the excess in terms of the Laplacian eigenvalues by the first author \cite[Thm.~3.1]{vd98} equals an expression for the excess in strongly distance-regular graphs by the second author and Garriga \cite[Thm.~3.3]{fg02}, \cite[Cor.~2.5]{f00}. In the following section we will recall the basic terminology and earlier results. Then the main result is derived in the last section.

%%%%%%%%%%%%%%%%%%%%%%%%%%%%%%%%%%%%%%%%%%%%%%%%
\section{Preliminaries}
%%%%%%%%%%%%%%%%%%%%%%%%%%%%%%%%%%%%%%%%%%%%%%%%

Let us first recall some basic notation and results on which our study is based. For more background on spectra of graphs,
distance-regular graphs, and their characterizations, see
\cite{b93,bcn89,bh12,cds82,dkt12,g93}.

Throughout this paper, $\G$ denotes a (finite, simple, and connected) graph
with vertex set $V$, order $n=|V|$, and diameter $D$. Its $(0,1)$-adjacency matrix is denoted by $\A$.
The set of
vertices at distance $i$ from a given vertex $u\in V$ is
denoted by $\G_i(u)$, for $i=0,1,\dots,D$, and $k_i(u)=|\G_i(u)|$. We abbreviate $k_1(u)$ by $k(u)$, the degree of vertex $u$. Also, the closed $i$-neighborhood of $u$ is $N_i(u)=\G_0(u)\cup\cdots\cup\G_i(u)$. Recall that, for every $i=0,1,\ldots,D$, the distance matrix $\A_i$ has entries $(\A_i)_{uv}=1$ if $\dist(u,v)=i$, and $(\A_i)_{uv}=0$ otherwise. In particular, $\A_0=\I$ and $\A_1=\A$.
Then, it is well-known that $\G$ is distance-regular if and only if there exist so-called distance polynomials $p_0,\ldots,p_D$, with $\deg p_i=i$, such that $p_i(\A)=\A_i$ for every $i=0,\ldots,D$.

The Laplacian matrix of $\G$ is the matrix $\L=\K-\A$, where $\K$ is the diagonal matrix with entries $K_{uu}=k(u)$,
for $u\in V$. The  (Laplacian) spectrum of $\G$  is
$\spec \G=\spec \L= \{\theta_0(=0)^{m_0},\theta_1^{m_1},\ldots,\theta_d^{m_d}\}$,
where  $\theta_0=0<\theta_1<\cdots <\theta_d$ are the distinct eigenvalues, and the superscripts
stand for their multiplicities $m_i=m(\theta_i)$.
In particular, since $\G$ is
connected, $m_0=1$,  and  $\theta_0$ has eigenvector $\j$, the all-$1$ vector.
We emphasize that throughout this note, $d$ will always denote the number of distinct eigenvalues minus one, and $D$ will denote the diameter.
Let $\F_i$, $i=0,1,\ldots,d$, be the idempotents of $\L$, that is $\F_i=\frac{1}{\phi_i}\prod_{j\neq i}(\L-\theta_j\I)=\U_i\U_i^{\top}$, where $\phi_i=\prod_{j\neq i}(\theta_i-\theta_j)$, and $\U_i$ is an $n\times m_i$ matrix having orthonormal eigenvectors of $\theta_i$ as columns. In particular, $\F_0=\frac{1}{n}\J$, with $\J$ being the all-$1$ matrix.

\subsection*{Laplacian predistance and Hoffman polynomials}
Given a graph $\G$ with spectrum as above,
the {\em Laplacian predistance polynomials}
$r_0,\ldots,r_d$, introduced analogously in \cite{fg97} for the adjacency spectrum,
are the orthogonal polynomials with respect to the scalar product
\begin{equation}\label{scalar-prod-glob}
\langle p,q\rangle_{\G}=\frac{1}{n}\tr (p(\L)q(\L))= \frac{1}{n}\sum_{i=0}^d m_i p(\theta_i)q(\theta_i), \qquad p,q\in \Re_{d}[x],
\end{equation}
normalized in such a way that
$\|r_i\|_{\G}^2=r_i(0)$. (This makes sense since it is known that, for any sequence of such orthogonal polynomials $p_0,\ldots,p_d$, we always have
 $p_i(0)\neq 0$.)
 As every sequence of orthogonal polynomials, the $r_i$s satisfy a three-term recurrence of the form
 \begin{equation}
 \label{recur}
 xr_i=\beta_{i-1}r_{i-1}+\alpha_i r_i+\gamma_{i+1}r_{i+1},\qquad i=0,\ldots,d,
 \end{equation}
 where $\beta_{-1}=\gamma_{d+1}=0$, and $\beta_{i-1}\gamma_i>0$ for $i=1,\ldots,d$. In fact, in our case it can be proved that the betas and gammas are negative, in a similar way as in \cite[Lemma 2.3]{adf14}.
 Also, similar as in the case of the adjacency predistance polynomials, it can be proved that $\alpha_i+\beta_i+\gamma_i=\theta_0=0$, $i=0\ldots,d$, and
$r_d(0)=n\left(\sum_{i=0}^d\frac{\phi_0^2}{m_i\phi_i^2}\right)^{-1}$, see \cite{cffg09}.

Here we can also consider a {\it Hoffman-like  polynomial} (see \cite{hof63} for the case of the adjacency spectrum), defined as
 $H=\frac{n}{\phi_0}\prod_{i=1}^d (x-\theta_i)$, where we recall that $\phi_0=\prod_{i=1}^d (-\theta_i)$. This polynomial
satisfies $H(\L)=n\F_0=\J$ (independently of whether $\G$ is regular or not), and
$H= r_0+r_1+\cdots +r_d$.  The latter follows from the fact that $\langle H,r_i\rangle_{\G}=\frac1n \tr (H(\L)r_i(\L)) =\frac 1n \tr (r_i(0)J) =\|r_i\|_{\G}^2$ for every $i=0,\ldots,d$. From $H(\L)=\J$ it follows that the diameter $D$ is at most $d$.

\section{The Laplacian spectral excess theorem}
In this section we prove the main result, which can be considered as the spectral excess theorem for nonregular graphs.
As in the short proofs of the (standard) spectral excess theorem,  we prove the Laplacian version of such a result in two steps, that correspond to the lemmas below.
Although the proofs of such lemmas are basically the same as in \cite{fgg10}, we have detailed them in order to have this note more self-contained.

\begin{lema}
\label{lema1}
Let $\G$ be a graph with Laplacian matrix $\L$, predistance polynomials $r_0,\ldots,r_d$,
and distance matrices $\A_i$, $i=0,\ldots,d$. If $r_d(\L)=\A_d$ then, $r_i(\L)=\A_i$ for every $i=0,1,\ldots,d$.
\end{lema}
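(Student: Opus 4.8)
The plan is to argue by downward induction on $i$, driven by a single locality fact about polynomials in $\L$: if $p$ has degree at most $i$, then $(p(\L))_{uv}=0$ whenever $\dist(u,v)>i$. This holds because $\L=\K-\A$ has a nonzero $(u,w)$-entry only when $u=w$ or $u\sim w$, so a nonzero entry of $\L^k$ at $(u,v)$ forces a walk of length at most $k$ between $u$ and $v$, whence $\dist(u,v)\le k$; summing over $k\le i$ gives the claim. In particular every $r_i(\L)$ is supported on pairs at distance at most $i$, and likewise $\L\A_{i+1}$ is supported on pairs at distance $i$, $i+1$, or $i+2$.

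Before the induction I would record that the hypothesis forces $D=d$. Indeed $r_d$ has degree $d$ while $\L$ has the $d+1$ distinct eigenvalues $\theta_0,\dots,\theta_d$, so $r_d$ cannot vanish at all of them and $r_d(\L)=\sum_i r_d(\theta_i)\F_i\neq\vec0$; then $\A_d=r_d(\L)\neq\vec0$ means there are vertices at distance $d$, i.e. $D=d$. Consequently $\A_0+\cdots+\A_d=\J=H(\L)=r_0(\L)+\cdots+r_d(\L)$, the last equality being the decomposition of the Hoffman-like polynomial recalled above.

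For the induction the base case $i=d$ is the hypothesis. Assuming $r_j(\L)=\A_j$ for all $j>i$, I would rearrange the recurrence \eqref{recur} at index $i+1$ and substitute the inductive hypothesis to obtain
\[
\beta_i\,r_i(\L)=\L\A_{i+1}-\alpha_{i+1}\A_{i+1}-\gamma_{i+2}\A_{i+2},
\]
where for $i=d-1$ the last term is absent since $\gamma_{d+1}=0$. The right-hand side is supported on distances $i,i+1,i+2$, so $\beta_i r_i(\L)$ has no entries at distances below $i$; since $\beta_i\neq0$ and $r_i(\L)$ has no entries at distances above $i$ by locality, $r_i(\L)$ is supported exactly on the pairs at distance $i$. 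To pin down the values I would evaluate $\sum_{j=0}^d r_j(\L)=\J$ at a pair $(u,v)$ with $\dist(u,v)=i$: the terms with $j>i$ vanish there by the inductive hypothesis, the terms with $j<i$ vanish by locality, and the total is $1$, so $(r_i(\L))_{uv}=1$. Hence $r_i(\L)=\A_i$, completing the step; at $i=0$ this is consistent with $r_0=1$ giving $r_0(\L)=\I=\A_0$.

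The main obstacle is really just the locality fact and its clean interaction with the recurrence; once $(p(\L))_{uv}=0$ for $\dist(u,v)>\deg p$ is in hand, the two support bounds squeeze $r_i(\L)$ onto a single distance class, and the global identity $\sum_i r_i(\L)=\J$ does the rest. The one point needing care is the bookkeeping of support ranges at the top of the induction (the vanishing $\gamma_{d+1}$ and the absence of distance-$(d+1)$ pairs once $D=d$ is known), which is why fixing $D=d$ at the outset is convenient.
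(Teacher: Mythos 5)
Your proof is correct and takes essentially the same route as the paper's: the identity $H(\L)=\J=\A_0+\cdots+\A_d$, the three-term recurrence \eqref{recur} to kill entries at distances below $i$, and the locality of polynomials in $\L$ to kill entries above $i$ --- exactly the three cases the paper works out for $i=d-1$ before declaring the remaining indices analogous. You have simply written out the full downward induction that the paper abbreviates, making the locality lemma and the (harmless, not strictly needed) observation $D=d$ explicit, so this is a faithful expansion rather than a different method.
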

\begin{proof}
We only show the case $i=d-1$, as the other cases are proved analogously. From the hypothesis and $H(\L)=\J=\sum_{i=0}^d \A_i$, we get that $\r_0(\L)+\cdots+r_{d-1}(\L)=\A_0+\cdots+\A_{d-1}$. We then distinguish three cases:
\begin{itemize}
\item
If $\dist(u,v)=d$, we clearly have $(r_{d-1}(\L))_{uv}=0$.
\item
If $\dist(u,v)=d-1$, the above gives $(r_{d-1}(\L))_{uv}=1$.
\item
If $\dist(u,v)\le d-2$, the three-term recurrence for $i=d$ is $xr_d=\beta_{d-1}r_{d-1}+\alpha_d r_d$. Then, when applied to $\L$, we get that
$\beta_{d-1}r_{d-1}(\L)=\L \A_d-\alpha_d\A_d$.  But $(\L \A_d)_{uv}=\sum_{w\in V}(\L)_{uw}(\A_d)_{wv}= \sum_{w\in N_1(u)}(\L)_{uw}(\A_d)_{wv}=0$ since
$\dist(w,v)\le \dist(u,v)+1\le d-1$. Thus, $(r_{d-1}(\L))_{uv}=0$ since $\beta_{d-1}\neq 0$.
\end{itemize}
Consequently,  $r_{d-1}(\L)=\A_{d-1}$.
\end{proof}

\begin{lema}
\label{lema2}
Let $\G$ be a graph with Laplacian predistance polynomial $r_d$. Let $\overline{k}_d$ be the average
over $V$ of the numbers $k_d(u)=|\G_d(u)|$. Then,
$$
\overline{k}_d\le r_d(0)
$$
and, in case of equality,  $r_d(\L)=\A_d$.
\end{lema}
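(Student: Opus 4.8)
The plan is to realize both $\overline{k}_d$ and $r_d(0)$ as squared norms in the matrix inner product $\langle \M,\N\rangle=\frac1n\tr(\M\N)$, which restricts to $\langle\cdot,\cdot\rangle_{\G}$ on polynomials in $\L$, and then to compare them by applying the Cauchy--Schwarz inequality to the two symmetric matrices $\A_d$ and $r_d(\L)$.

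First I would record the key entrywise fact that $r_d(\L)$ and $\A_d$ agree on all pairs at distance exactly $d$. Since $\L=\K-\A$ has $(\L)_{uv}=0$ whenever $\dist(u,v)\ge 2$, a straightforward induction gives $(\L^i)_{uv}=0$ whenever $\dist(u,v)>i$, so that $(p(\L))_{uv}=0$ for any polynomial $p$ with $\deg p<\dist(u,v)$. Applying this to $H=r_0+\cdots+r_d$, for which $H(\L)=\J$: for a pair $u,v$ with $\dist(u,v)=d$ each term $r_i(\L)$ with $i<d$ contributes $0$, so $(r_d(\L))_{uv}=(\J)_{uv}=1=(\A_d)_{uv}$.

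Next I would compute the three relevant inner products. Because $\A_d$ has $0/1$ entries, $(\A_d^2)_{uu}=k_d(u)$, so $\frac1n\tr(\A_d^2)=\overline{k}_d$; the chosen normalization of $r_d$ gives $\frac1n\tr(r_d(\L)^2)=\|r_d\|_{\G}^2=r_d(0)$; and the entrywise fact above yields $\frac1n\tr(\A_d\,r_d(\L))=\frac1n\sum_{\dist(u,v)=d}(r_d(\L))_{uv}=\frac1n\sum_{u\in V} k_d(u)=\overline{k}_d$, since only the distance-$d$ entries of $\A_d$ are nonzero and there $r_d(\L)$ equals $1$. Cauchy--Schwarz then reads $\overline{k}_d=\langle\A_d,r_d(\L)\rangle\le\|\A_d\|\,\|r_d(\L)\|=\sqrt{\overline{k}_d}\,\sqrt{r_d(0)}$; assuming $\overline{k}_d>0$ (otherwise $\A_d=\vec0$ and the bound is trivial since $r_d(0)=\|r_d\|_{\G}^2>0$), squaring and dividing by $\overline{k}_d$ gives $\overline{k}_d\le r_d(0)$.

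For the equality case, equality in Cauchy--Schwarz forces $r_d(\L)=\lambda\A_d$ for some scalar $\lambda\ge 0$; evaluating at a pair $u,v$ with $\dist(u,v)=d$ (such a pair exists, since equality entails $\overline{k}_d=r_d(0)>0$) gives $1=\lambda\cdot 1$, hence $\lambda=1$ and $r_d(\L)=\A_d$. I expect the only delicate point to be the entrywise agreement step that establishes $(r_d(\L))_{uv}=1$ at distance $d$, as everything else is a formal manipulation of the inner product: one must use correctly the ``locality'' of polynomials in $\L$ (vanishing of $(p(\L))_{uv}$ beyond $\deg p$) and separate out the degenerate case $\overline{k}_d=0$ before dividing.
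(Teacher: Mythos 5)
Your proof is correct and takes essentially the same route as the paper's: both realize $\overline{k}_d$ and $r_d(0)$ as norms under $\langle \M,\N\rangle=\frac1n\tr(\M\N)$, compute $\langle r_d(\L),\A_d\rangle=\overline{k}_d$ via $H(\L)=\J$ together with the vanishing of the lower-degree terms $r_i(\L)$, $i<d$, on distance-$d$ entries, and conclude by Cauchy--Schwarz. Your only deviations are cosmetic improvements: you pin down the proportionality constant in the equality case by an entrywise evaluation at a distance-$d$ pair rather than the paper's identity $\overline{k}_d=c\,\overline{k}_d$, and you explicitly dispose of the degenerate case $\overline{k}_d=0$ before dividing, a point the paper leaves implicit.
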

\begin{proof}
First, notice that
$
\langle r_d(\L),\A_d\rangle=\langle H(\L),\A_d\rangle=\langle \J,\A_d\rangle=\|\A_d\|^2=\overline{k}_d
$. Note that we use the inner product on matrices defined by $\langle \M,\N\rangle =\frac 1n \tr (\M\N)$, so that $\langle p,q\rangle_{\G}=\langle p(\L),q(\L)\rangle$ by \eqref{scalar-prod-glob}. Also, by the Cauchy-Schwarz inequality,
$|\langle r_d(\L),\A_d\rangle|^2\le \|r_d\|^2_{\G}\|\A_d\|^2=r_d(0)\overline{k}_d$. Combining the above, the inequality holds. Moreover, in case of equality, $r_d(\L)=c\A_d$ for some constant $c$. Finally, we have that $c=1$ because $\overline{k}_d =\langle r_d(\L),\A_d\rangle=\langle c\A_d,\A_d\rangle=c \overline{k}_d$ (and $\overline{k}_d=r_d(0)>0$).
\end{proof}

Now we are ready to give the spectral excess theorem for general graphs or, what we could call,
the Laplacian spectral excess theorem.

\begin{theo}
\label{teo(basic)}
Let $\G$ be a graph on $n$ vertices, with Laplacian spectrum $\{\theta_0(=0)^{m_0=1}, \linebreak \theta_1^{m_1}, \ldots, \theta_d^{m_d}\}$, and Laplacian predistance polynomial $r_d$. Let $\overline{k}_d$ be the average over $V$ of the numbers $k_d(u)=|\G_d(u)|$. Then, $\G$ is distance-regular
if and only if
$$
\overline{k}_d= r_d(0)=n\left(\sum_{i=0}^d\frac{\phi_0^2}{m_i\phi_i^2}\right)^{-1},
$$
where $\phi_i=\prod_{j\neq i} (\theta_i-\theta_j)$, $i=0,\ldots,d$.
\end{theo}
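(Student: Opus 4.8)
The plan is to prove both directions of the equivalence, leveraging the two lemmas as the technical engine. The chain of equalities in the displayed formula is really two separate claims: the characterization $\overline{k}_d = r_d(0)$ iff distance-regular, and the explicit evaluation $r_d(0) = n\left(\sum_{i=0}^d \phi_0^2/(m_i\phi_i^2)\right)^{-1}$. The latter is already recorded in the preliminaries (it is the stated formula for $r_d(0)$ attributed to \cite{cffg09}), so I would dispose of it immediately by citation and concentrate all the work on the characterization.

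For the direction $\overline{k}_d = r_d(0) \Rightarrow$ distance-regular, I would run the lemmas in sequence. Lemma~\ref{lema2} gives that equality $\overline{k}_d = r_d(0)$ forces $r_d(\L) = \A_d$. Feeding this into Lemma~\ref{lema1} yields $r_i(\L) = \A_i$ for every $i = 0, 1, \ldots, d$. At this point I would argue that this already delivers distance-regularity: since $r_0, \ldots, r_d$ are genuine polynomials in one variable with $\deg r_i = i$, the identities $\A_i = r_i(\L) = r_i(\K - \A)$ express each distance matrix as a polynomial in $\L$. The one subtlety I would need to address is that distance-regularity is classically phrased via the existence of distance polynomials $p_i(\A) = \A_i$ in the \emph{adjacency} matrix, as stated in the preliminaries, so I must reconcile the Laplacian formulation with this. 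The cleanest route is to observe that having $\A_i = r_i(\L)$ as a polynomial in $\L$ for all $i$ forces the diameter $D$ to equal $d$ (because the $\A_i$ with $r_i(\L)=\A_i$ and $\deg r_i = i$ are nonzero and linearly independent, so there are exactly $d+1$ distinct distances), and then one checks that the combinatorial regularity conditions — constant degree $k(u)$, constant intersection numbers — follow, so that $\K = k\I$ is forced and the Laplacian and adjacency pictures coincide.

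For the converse, assuming $\G$ is distance-regular, I would use the standard fact (invoked in the preliminaries) that there exist distance polynomials in $\A$ with $p_i(\A) = \A_i$; in particular $\G$ is regular of some degree $k$, so $\L = k\I - \A$ and every polynomial in $\A$ is a polynomial in $\L$ and vice versa. Then the Laplacian predistance polynomials $r_i$ coincide (after the change of variable) with the adjacency predistance polynomials, and the orthogonality together with the three-term recurrence identifies $r_i(\L) = \A_i$ exactly as in the classical theory. In particular $r_d(\L) = \A_d$, whence $\overline{k}_d = \langle \A_d, \A_d\rangle = \|\A_d\|^2 = r_d(0)$ by the normalization $\|r_d\|_\G^2 = r_d(0)$ combined with the computation in Lemma~\ref{lema2} that $\langle r_d(\L), \A_d\rangle = \overline{k}_d$. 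This recovers the equality.

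The main obstacle I anticipate is the reconciliation step in the forward direction: translating the Laplacian-polynomial identities $\A_i = r_i(\L)$ into the combinatorial definition of distance-regularity, and in particular establishing that $\G$ must be regular. A priori $\L = \K - \A$ mixes the degree matrix with the adjacency matrix, so it is not immediate that $r_i(\L)$ being the $(0,1)$-distance matrix $\A_i$ forces the degrees to be constant. I would handle this by examining the low-degree identities first: $r_0(\L) = \A_0 = \I$ and $r_1(\L) = \A_1 = \A$ pin down $r_1$ as a linear polynomial, and comparing the diagonal of $r_1(\L)$, which involves the degrees through $\L$, with the zero diagonal of $\A$ should force the degree to be constant across $V$. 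Once regularity is in hand, everything collapses to the adjacency setting and the classical spectral excess theorem machinery applies verbatim.
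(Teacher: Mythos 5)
Your proposal is correct and takes essentially the same route as the paper: chain Lemma~\ref{lema2} into Lemma~\ref{lema1} to get $r_i(\L)=\A_i$ for all $i$, extract regularity from the $i=1$ identity by comparing the diagonal of $r_1(\L)$ with the zero diagonal of $\A$ (the paper writes $\omega_1\L+\omega_2\I=\A$, forcing $(\L)_{uu}=-\omega_2/\omega_1$), and then pass between the Laplacian and adjacency settings via the change of variable $p_i(x)=r_i(k-x)$, which also handles necessity. Your initial, vaguer sketch of the forward direction (linear independence of the $\A_i$, checking intersection numbers directly) is superseded by the concrete diagonal argument in your final paragraph, which is exactly the paper's mechanism.
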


\begin{proof}
For sufficiency, Lemmas \ref{lema1} and \ref{lema2} imply that $r_i(\L)=\A_i$ for every $i=0,1\ldots,d$.
In particular, for $i=1$, there exist some constants
$\omega_1\neq 0$ and $\omega_2$ such that $\omega_1\L+\omega_2\I=\A$,
which implies that $(\L)_{uu}=-\omega_2/\omega_1$ for every $u\in V$.
Then, $\G$ is regular with degree $k=-\omega_2/\omega_1$, and $\L=k\I-\A$.
In turn, this assures the existence of the distance polynomials $p_0,\ldots,p_d$ of $\G$,
just take $p_i(x)=r_i(k-x)$ for $i=0,\ldots,d$, and hence $\G$ is distance-regular (with $D=d$).
 Necessity follows straightforwardly from $r_d(x)=p_d(k-x)$.
\end{proof}

Let us illustrate this Laplacian spectral excess theorem in the case of graphs with three
 Laplacian eigenvalues, that is, the case $d=2$. Such graphs have been studied in \cite{mumubar}.

Note that for every $d$, we have that $r_0=1$, $r_1=\frac1{\gamma_1}(x-\alpha_0)$,
and that $\alpha_0=\frac1n \tr \L= \overline{k}$, the average degree.
Moreover, it can be shown that
$\gamma_1=-1+\overline{k}-\overline{k^2}/\overline{k}$, where $\overline{k^2}=\frac 1n \sum_{u \in V}k(u)^2$,
using among others that $\frac 1n \tr \L^2 = \overline{k^2}+\overline{k}$.
Note that for a $k$-regular graph we thus have that $\alpha_0=k$, and $\gamma_1=-1$,
so that $r_1=k-x$, which corresponds to the fact that $\A=k\I-\L$.

For the case $d=2$, the inequality $\overline{k}_2 \leq r_2(0)$ of Lemma \ref{lema2}
can be rewritten as $n-1-\overline{k} \leq H(0)-r_0(0)-r_1(0)=n-1+\frac{\alpha_0}{\gamma_1}$,
which is equivalent to the inequality $\gamma_1 \leq -1$ (recall that $\gamma_1$ is negative),
which in turn is equivalent to the inequality $\overline{k^2} \geq \overline{k}^2$.
This is of course a standard inequality, and equality holds precisely when the graph is regular.
Thus we may draw the (known) conclusion that a graph with three Laplacian eigenvalues
is distance-regular (strongly regular in fact) precisely when it is regular.

A (non-regular) example with $D=d=3$ is given by the path on four vertices, which has Laplacian spectrum $\{0,2-\sqrt{2},2,2+\sqrt{2}\}$. The betas, alphas, and gammas are as in below table.

\begin{table}[h]\begin{center}
\begin{tabular}{|c||c|c|c|c|}
  \hline
  $i$        & 0 & 1 & 2     & 3      \\
  \hline
  \hline
  $\beta_i$  & $-3/2$ & $-16/21$ & $-7/10$ &    \\
  \hline
  $\alpha_i$ & $3/2$ & $27/14$ & $62/35$ & $4/5$ \\
  \hline
  $\gamma_i$ &   & $-7/6$ & $-15/14$ & $-4/5$  \\
  \hline
\end{tabular}\end{center}
\end{table}

The Laplacian predistance polynomials are
\begin{align*}
&r_0=1,\\
&r_1=-\frac 67 x + \frac 97,\\
&r_2=\frac45 x^2 - \frac{96}{35} x + \frac{32}{35},\\
&r_3=-x^3 + \frac{26}5x^2 - \frac{32}5x + \frac45.
\end{align*}
Consequently, Lemma \ref{lema2} gives the inequality $\overline{k}_3 \leq \frac45$.
Indeed, in this graph, we have that $\overline{k}_3 = \frac12$. Note that this example has constant $k_2=1$,
which reminds us of the version of the spectral excess theorem for regular graphs with $d=3$ in \cite{vdh97}
in terms of the number of vertices at distance two.

\noindent{\large \bf Acknowledgments.} The authors thank a referee for comments on an earlier version.
This work was done while the second author was visiting the Department of Econometrics and Operations Research, in
Tilburg University (The Netherlands).

Research supported by the
{\em Ministerio de Ciencia e Innovaci\'on}, Spain, and the {\em European Regional
Development Fund} under project MTM2011-28800-C02-01, and the {\em Catalan Research
Council} under project 2009SGR1387 (M.A.F.).

%%%%%%%%%%%%%%%%%%%%%%%%%%%%%%%%%%%%%%%%%%%%%%%%
%Bibliografia
%%%%%%%%%%%%%%%%%%%%%%%%%%%%%%%%%%%%%%%%%%%%%%%%


\begin{thebibliography}{99}

\bibitem{adf14}
A. Abiad, E.R. van Dam, and M.A. Fiol, Some spectral and quasi-spectral characterizations of distance-regular graphs, preprint (2014); arXiv:\href{http://arxiv.org/abs/1404.3973}{1404.3973}.

\bibitem{b93}
N. Biggs, \emph{Algebraic Graph Theory}, Cambridge University Press,
Cambridge, 1974, second edition, 1993.

\bibitem{bcn89}
A.E. Brouwer, A.M. Cohen, and A. Neumaier, \emph{Distance-Regular Graphs},
Springer-Verlag, Berlin-New York, 1989.

\bibitem{bh12}
A.E. Brouwer and W.H. Haemers, \emph{Spectra of Graphs},
Springer,
2012; available online at \url{http://homepages.cwi.nl/~aeb/math/ipm/}.

\bibitem{cffg09}
M. C\'amara, J. F\`abrega, M.A. Fiol, and E. Garriga,
Some families of orthogonal polynomials of a discrete variable and
their applications to graphs and codes, {\em Electron. J. Combin.} 16(1) (2009), \#R83.

\bibitem{cds82}
D.M. Cvetkovi\'c, M. Doob and H. Sachs, {\em Spectra of Graphs. Theory and Application},
VEB Deutscher Verlag der Wissenschaften, Berlin, second edition, 1982.

\bibitem{vd98}
E.R. van Dam, Bounds on special subsets in graphs, eigenvalues and association schemes, {\em J. Algebraic Combin.} 7 (1998), 321--332.

\bibitem{vd08}
E.R. van Dam, The spectral excess theorem for distance-regular
graphs: a global (over)view, {\em Electron. J. Combin.} 15(1)
(2008), \#R129.

\bibitem{vdh97}
E.R. van Dam and W.H. Haemers,  A characterization of distance-regular graphs with diameter three, {\em J. Algebraic Combin.} 6 (1997), 299--303.


\bibitem{mumubar}
E.R. van Dam and W.H. Haemers, Graphs with constant $\mu$ and $\overline{\mu}$, {\em Discrete Math.} 182 (1998), 293--307.

\bibitem{dkt12}
E.R. van Dam, J.H. Koolen, and H. Tanaka, Distance-regular
graphs, manuscript (2014), available online at
\url{https://sites.google.com/site/edwinrvandam/home/papers/drg.pdf}.

\bibitem{f00}
M.A. Fiol,
A quasi-spectral characterization
of strongly distance-regular graphs, {\em Electron. J. Combin.} 7
(2000), \#R51.

\bibitem{f13}
M.A. Fiol, On some approaches to the spectral excess
theorem for nonregular graphs,
\emph{J. Combin. Theory Ser. A} 120 (2013), 1285--1290.

\bibitem{fgg10}
M.A. Fiol, S. Gago, and E. Garriga,
A simple proof of the spectral excess theorem for distance-regular graphs,
{\em Linear Algebra Appl.} 432 (2010), 2418--2422.

\bibitem{fg97}
M.A. Fiol and E. Garriga,
From local adjacency polynomials to locally pseudo-distance-regular graphs,
\emph{J. Combin. Theory Ser. B} 71 (1997), 162--183.

\bibitem{fg02}
M.A. Fiol and E. Garriga, Pseudo-strong regularity around a set, {\it Linear
Multilinear Algebra} 50 (2002), 33--47.


\bibitem{g93}
C.D. Godsil, {\it Algebraic Combinatorics}, Chapman and Hall, NewYork, 1993.

\bibitem{hof63}
A.J. Hoffman, On the polynomial of a graph,
{\it Amer. Math. Monthly} 70 (1963), 30--36.

\bibitem{lw11}
G.-S. Lee, C.-W. Weng,
The spectral excess theorem for general graphs,
{\em J. Combin. Theory, Ser. A} 119 (2012), 1427--1431.

\bibitem{lw14}
G.-S. Lee, C.-W. Weng,
A characterization of bipartite distance-regular graphs,
{\em Linear Algebra Appl.} 446 (2014),  91--103.

\end{thebibliography}
\end{document}